\begin{document}
\baselineskip 18pt
\hfuzz=6pt

\newtheorem{theorem}{Theorem}[section]
\newtheorem{prop}[theorem]{Proposition}
\newtheorem{lemma}[theorem]{Lemma}
\newtheorem{definition}[theorem]{Definition}
\newtheorem{cor}[theorem]{Corollary}
\newtheorem{example}[theorem]{Example}
\newtheorem{remark}[theorem]{Remark}
\newtheorem{assumption}[theorem]{Assumption}
\newcommand{\ra}{\rightarrow}
\renewcommand{\theequation}
{\thesection.\arabic{equation}}
\newcommand{\ccc}{{\mathcal C}}
\newcommand{\one}{1\hspace{-4.5pt}1}

\def\HSL { H^1_{L,S}(X) }

\def \Gg {\widetilde{{\mathcal g}}_{L}}
\def \GG {{\mathcal g}_{L}}
\def \SL {\sqrt[m]{L}}
\def \sq {\sqrt}
\def \GL {{\mathcal G}_{\lambda,L}^{\ast}}

\def \l {\lambda}
\def \gL{{\widetilde {\mathcal G}}_{\lambda, L}^{\ast}}
\def \RN {\mathbb{R}^n}
\def\RR{\mathbb R}
\def \Real {\mathbb R}
\newcommand{\nf}{\infty}
 \def \Lips  {{   \Lambda}_{L}^{ \alpha,  s }(X)}
\def\BL {{\rm BMO}_{L}(X)}
\def\HAL { \mathbb{F}\dot{\mathbb{H}}_{L,at,M}^{\lambda}({\mathbb R}^n)}
\def\HML { \mathbb{F}\dot{\mathbb{H}}_{L,mol,M}^{\lambda}({\mathbb R}^n) }
\def\HM{ H^p_{L, {mol}, 1}(X) }
\def\CL {\mathscr{L}_L^{p,\lambda}(\RN)}
\def \M {L^{p,\lambda}}

\def\HSL { H^p_{L, S_h}(X) }
\newcommand\mcS{\mathcal{S}}
\newcommand\mcB{\mathcal{B}}
\newcommand\D{\mathcal{D}}
\newcommand\C{\mathbb{C}}
\newcommand\N{\mathbb{N}}
\newcommand\R{\mathbb{R}}
\newcommand\G{\mathbb{G}}
\newcommand\T{\mathbb{T}}
\newcommand\Z{\mathbb{Z}}
\allowdisplaybreaks

 \title[Poisson integrals of Schr\"odinger operators with Morrey traces ]
 {On characterization of Poisson integrals of Schr\"odinger operators with Morrey traces}

\author[Liang Song, Xiaoxiao Tian and Lixin Yan]{Liang Song, Xiaoxiao Tian and Lixin Yan}
\thanks{{\it {\rm 2010} Mathematics Subject Classification:}
  42B37,  42B35, 47B38.}
\thanks{{\it Key words and phrases:} Schr\"odinger  operators, Dirichlet problem, Morrey spaces, Campanato spaces,  Poisson semigroup.
 }

\address{Liang Song, Department of Mathematics, Sun Yat-sen
University, Guangzhou, 510275, P.R. China} \email{songl@mail.sysu.edu.cn}
\address{Xiaoxiao Tian, Department of Mathematics, Sun Yat-sen
University, Guangzhou, 510275, P.R. China} \email{tianxx3@mail2.sysu.edu.cn}
\address{Lixin Yan, Department of Mathematics, Sun Yat-sen
University, Guangzhou, 510275, P.R. China} \email{mcsylx@mail.sysu.edu.cn}

\begin{abstract}

 Let $L$ be a Schr\"odinger operator of the form $L=-\Delta+V$ acting on $L^2(\mathbb R^n)$ where the nonnegative
potential $V$ belongs to  the reverse H\"older class    $B_q$ for some $q\geq n.$
In this article we will
show that
 a function  $f\in L^{2, \lambda}(\RN), 0<\lambda<n$ is the trace of the solution of
${\mathbb L}u=-u_{tt}+L u=0,  u(x,0)= f(x),$
 where $u$ satisfies  a Carleson type condition
\begin{eqnarray*}
 \sup_{x_B, r_B} r_B^{-\lambda}\int_0^{r_B}\int_{B(x_B, r_B)}  t|\nabla   u(x,t)|^2 {dx dt }  \leq C <\infty.
\end{eqnarray*}
Its  proof heavily relies on
 investigate the intrinsic relationship between the classical  Morrey spaces   and
the new Campanato  spaces
 $\mathscr{L}_L^{2,\lambda}(\RN)$ associated to the operator $L$, i.e.
$$\mathscr{L}_L^{2,\lambda}(\mathbb{R}^n)= {L}^{2,\lambda}(\mathbb{R}^n).
$$
Conversely, this Carleson  type  condition characterizes  all the ${\mathbb L}$-harmonic functions whose traces belong to
the space $L^{2, \lambda}(\RN)$ for all $ 0<\lambda<n$. This extends the previous results of \cite{FJN, DYZ, JXY}.
\end{abstract}

\maketitle

\section{Introduction}

\setcounter{equation}{0}

 Consider Schr\"odinger operators
\begin{align}\label{e4.1}
L=-\Delta+V(x) \qquad  {\rm on} \  \RN, \ \quad  n\geq 3,
\end{align}
 where $V\in L^1_{\rm loc}(\RN)$ is   nonnegative  and not identically zero. It follows that
 the operator $L$ is a self-adjoint positive definite operator.  From the Feynman-Kac formula,
  the kernel $p_t(x,y)$ of the semigroup
$e^{-tL}$ satisfies the estimate
\begin{align}\label{e4.1}
0\leq p_t(x,y)\leq \frac{1}{(4\pi t)^{n/2}}e^{-\frac{|x-y|^2}{4t}}.
\end{align}
In addition, suppose that  $V(x)$  also belong to the reverse
H\"older class $B_q$ for some $q\geq n$,
which by definition means that   $V\in L^{q}_{\rm loc}(\RN), V\geq 0$, and
there exists a  constant $C>0$ such that   the reverse H\"older inequality
$$
\left ( \frac{1}{|B|}\int_B {V(x)}^q \, dx\right )^{1/q}\leq C  \frac{1}{|B|}\int_B V(x) \, dx
$$
holds for each ball  $B$ in $\RN$. We refer to \cite{DYZ, MSTZ, Shen} for the properties of these Schr\"odinger operators.

For $f\in  L^p(\RN)$, $1\leq p<  \infty,$
it is well known that the Poisson extension $u(x,t)=e^{-t\sqrt{L}}f(x), t>0, x\in\RN$, is a solution to the equation
\begin{eqnarray}\label{e4.7}
{\mathbb L}u=-u_{tt}+{L} u =0\ \ \ {\rm in }\ {\mathbb R}^{n+1}_+
\end{eqnarray}
with the boundary data $f$ on $\RN$. The equation  ${\mathbb L}u =0$ is understood in the weak sense, that is,
  $u\in {W}^{1, 2}_{{\rm loc}} ( {\mathbb R}^{n+1}_+) $ is a weak solution of ${\mathbb L}u =0$   if it satisfies
$$\int_{{\mathbb R}^{n+1}_+} {\nabla}u\cdot {\nabla}\psi \,dY+
 \int_{{\mathbb R}^{n+1}_+} V u\psi \,dY=0,\ \ \ \ \forall \psi\in C_0^{1}({\mathbb R}^{n+1}_+).
 $$
At the end-point space $L^{ \infty}(\RN)$, the study of singular integrals has a natural substitution, the BMO space, i.e. the space
of functions of bounded mean oscillation.
It was shown in \cite{DYZ}   that
 a
 $  {\rm BMO}_{{L}}(\RN)$ function   is the trace of the solution of
 $-u_{tt}+ L u=0,  u(x,0)= f(x),$
 whenever $u$ satisfies
\begin{eqnarray} \label{e4.8}
 \sup_{x_B, r_B} r_B^{-n}\int_0^{r_B}\int_{B(x_B, r_B)}  t|\nabla  u(x,t)|^2 {dx dt }  \leq C<\infty,
\end{eqnarray}
where $\nabla=(\nabla_x, \partial_t).$
Conversely, this Carleson   condition characterizes  all the ${\mathbb L}$-harmonic functions whose traces belong to
the space ${\rm BMO}_{{L}}(\RN)$ associated to an operator $L$,
which  extends the  analogous characterization founded by Fabes, Johnson and Neri in \cite{FJN}
for   the classical BMO space  of John-Nirenberg. See also Chapter 2 of the standard textbook \cite{SW}.

The main goal of this paper is to continue this line of research (\cite{DYZ, FJN, SW}) to study
  the cases of $f$ in Morrey spaces. Recall that  Morrey spaces were introduced in 1938 by C.B.  Morrey \cite{Mo}
in relation to regularity problems of solutions to partial differential equations.
Recall that for every $1\leq p<\infty$ and $\lambda\geq 0$,
the Morrey
space ${L}^{p,\lambda}(\RN)$  is defined as
 \begin{align}\label{e1.1}
{L}^{p,\lambda}(\RN)=\left\{ f\in L^p_{\rm loc}(\RN): \ \sup\limits_{x\in \RN;\ r>0} r^{-\lambda}
\int_{B(x, r)} |f(y)|^p \ dy<\infty\right\}.
\end{align}
This is a Banach space with respect to the norm
 \begin{align}\label{e1.2}
\|f\|_{L^{p,\lambda}(\RN)}=\left(\sup\limits_{x\in \RN;\ r>0} r^{-\lambda}
\int_{{  B}(x, r)} |f(y)|^p \ dy\right)^{1/p}<\infty.
\end{align}
It is known that $L^{p, 0}(\RN)= L^p(\RN)$ and $L^{p, n}(\RN)= L^{\infty}(\RN)$. When $\lambda>n$, the space ${L}^{p,\lambda}(\RN)$ is trivial ($L^{p, \lambda}(\RN)=\{0\}$).
 In the case $\lambda\in (0, n]$,
the space ${L}^{p,\lambda}(\RN)$ is non-separable. We refer to \cite{JTW,P,AX2,Ca,Na,RSS,Ta} for more properties of the Morrey spaces.

The following theorem is our main result of this paper.
 \begin{theorem}\label{th4.2}
 Suppose $V\in B_q$ for some $q\geq n$ and $0<\lambda<n$.
We denote by  ${\rm HL}^{2,\lambda}_L(\Real_+^{n+1})$  the class of all $C^1$-functions $u(x,t)$
of the solution of ${\mathbb L}u=0$  in $\Real_+^{n+1} $ such that
\begin{eqnarray}\label{e4.8}
\|u\|^2_{{\rm HL}^{2,\lambda}_L(\Real_+^{n+1})}&=& \sup_{x_B, r_B}     r_B^{-\lambda}
\int_0^{r_B}\!\int_{B(x_B, r_B)} t | \nabla u(x,t) |^2
{dx dt }  <\infty,
\end{eqnarray}
\noindent where  $\nabla=(\nabla_x, \partial_t)$.
 Then we have
 \begin{itemize}
\item[(1)]  If $f\in L^{2,\lambda}(\RN)$, then   the function $u=e^{-t\sqrt{L}}f$
is in ${\rm HL}^{2,\lambda}_L(\Real_+^{n+1})$,   and
 $$
 \|u\|_{{\rm HL}^{2,\lambda}_L(\Real_+^{n+1})}\leq C \|f\|_{L^{2,\lambda}(\RN)}.
 $$

\item[(2)]  If $u\in {\rm HL}^{2,\lambda}_L(\Real_+^{n+1})$, then    there exists some
$f\in L^{2,\lambda}(\Real^{n})$ such that $u(x,t)=e^{-t \sqrt{L}}f(x)$,
and
$$
\|f\|_{L^{2,\lambda}(\RN)}\leq C\|u\|_{{\rm HL}^{2,\lambda}_L(\Real_+^{n+1})}.
$$
 \end{itemize}
\end{theorem}

We would like to mention that  in the case of $L=-\Delta$,   Theorem \ref{th4.2}
was obtained by Jiang, Xiao and Yang \cite{JXY}. When $L$ is  the Schr\"odinger operators as in (\ref{e4.1}) above,
the proof of (1) of  our Theorem \ref{th4.2}  follows by the standard argument to use  the definition of Morrey spaces and the full gradient estimates
on the kernel of the Poisson semigroup in the $(x,t)$ variables under the assumption on $V\in B_q$ for some $q\geq n$.
To prove (2) of  our Theorem \ref{th4.2}, we need to investigate the intrinsic relationship between the  Morrey space   and
the Campanato  spaces
 $\mathscr{L}_L^{p,\lambda}(\RN)$ associated to operators, which was introduced and studied in
 \cite{DDSY,  DXY, DY1,  DY2}.
Following \cite{DXY}, we say that a function $f$ (with appropriate bound on its size $|f|$) belongs to the space
  (where $1\leq p<\infty$ and $\lambda>0$), provided
\begin{eqnarray}\label{e1.6}
 \|f\|_{{\mathscr L}^{p, \lambda}_L(\RN)}=
 \left(\sup\limits_{x\in\RN, \ r>0} r^{-\lambda}\int_{ B (x,r)}
|f(y)-e^{-r^2L}f(y)|^p \ dy\right)^{1/p}
 <\infty.
\end{eqnarray}
 We  will prove  that  for every  $1\leq p<\infty$ and $0<\lambda<n$,
the Campanato spaces  ${\mathscr L}^{p,\lambda}_L(\RN)$
(modulo the kernel  spaces)   coincides  with  the Morrey spaces  $\M(\mathbb{R}^n)$, i.e.,
\begin{align}\label{ee1.1}
\mathscr{L}_L^{p,\lambda}(\mathbb{R}^n)/\mathcal{K}_{L,p}= {L}^{p,\lambda}(\mathbb{R}^n)
\quad {\rm and} \quad \|f\|_{\CL}\approx\|f-\lim\limits_{t\to +\infty} e^{-tL}f\|_{\M}.
\end{align}
See Theorem~\ref{th3.1} below.
Observe that  for the Schr\"odinger operator $L$ as in (\ref{e4.1})   under the additional assumption $V\in B_q$ for some $q\geq n,$
we have that
for $0<\lambda<n, 1\leq p<\infty$   there holds: $\mathcal{K}_{L,p}=\mathcal{K}_{\sqrt{L},p}=\{0\}$, and hence
\begin{align}\label{e1.7}
\mathscr{L}_L^{p,\lambda}(\mathbb{R}^n)= \mathscr{L}_{\sqrt{L}}^{p,\lambda}(\mathbb{R}^n)
= {L}^{p,\lambda}(\mathbb{R}^n).
\end{align}
From the equivalence \eqref{e1.7} between  the  Morrey spaces   and
the Campanato  spaces
 $\mathscr{L}_L^{p,\lambda}(\RN)$,
  we follow the line of the proof as in  \cite{FJN, DYZ,JXY}
  to obtain the proof of  (2) of Theorem \ref{th4.2}.

Throughout, the letters $C$ and $c$ will  denote (possibly different)
constants that are independent of the essential variables.

\medskip

\medskip
\section{Properties of Campanato spaces associated to  operators}
\setcounter{equation}{0}

In this section, we assume
  that $L$ is a linear operator on $L^2({\mathbb  R}^n)$ which generates
an analytic semigroup $e^{-tL}$ with a  kernel $p_t(x,y)$  satisfying
\begin{equation}
|p_t(x,y)|\leq Ct^{-{n/m}} \left( 1 +{\frac{|x-y|}{t^{1/m}}}\right)^{-(n+\epsilon)},
 \quad\forall x,y\in {\mathbb R}^n,
\label{e2.1}
\end{equation}
where $C, m$ and $\epsilon$ are positive constants.

 We now define the class
of functions that the operators $e^{-tL}$ act upon. Fix $1\leq p<\infty.$
For any  $\beta>0$, a complex-valued function $f\in L^p_{\rm loc}({\mathbb R}^n)$ is
said to be a  function of type
$(p;  \beta)$   if $f$ satisfies
\begin{equation}
\left(\int_{{\mathbb R}^n}{\frac{|f(x)|^p}{(1+|x|)^{n+\beta}}}dx\right)^{1/p}\leq C<\infty.
\label{e2.3}
\end{equation}
We denote by ${\mathcal M}_{(p;  \beta)}$ the collection of all  functions
of type $(p; \beta)$. If $f\in {\mathcal M}_{(p;  \beta)},$  the norm
of $f\in {\mathcal M}_{(p;  \beta)}$ is defined by
$$
\|f\|_{{\mathcal M}_{(p;   \beta)}}
=\inf\{C> 0:\ \ (\ref{e2.3})  \ {\rm holds}\}.
$$
It is not hard to see that
  ${\mathcal M}_{(p;  \beta)}$ is a complex Banach space
under $
\|f\|_{{\mathcal M}_{(p;   \beta)}}<\infty$.
For any given  operator $L$, let
\begin{eqnarray}
\label{e2.4}
  {\Theta }(L)=\sup\big\{\epsilon>0:\ \ (\ref{e2.1})\ {\rm holds} \ \big\}
\end{eqnarray}
and write
 \begin{eqnarray} \label{e2.5}
{\mathcal M}_p=
\left \{
\begin{array}{ll}
{\mathcal M}_{(p; {\Theta }(L))} &\ \ \ \ \ \  \ \ \ {\rm if } \ {\Theta }(L) <\infty;\\\\
  \bigcup\limits_{\beta:\
0<\beta<\infty}
{\mathcal M}_{(p;  \beta)} &\ \ \ \ \ \ \ \ \ {\rm if} \ {\Theta }(L)=\infty.
\end{array}
\right.
\end{eqnarray}
Note that if $L=-\Delta$ or $L=\sqrt{-\Delta}$ on ${\mathbb R}^n$,
then ${\Theta }(-\Delta)=+\infty$ or ${\Theta }(\sqrt{-\Delta})=1,$ respectively.
For any $(x,t)\in {\mathbb R}^{n+1}_+$ and $f\in {\mathcal M}_p$, define
\begin{eqnarray}
 e^{-tL}f(x)=\int_{{\mathbb R}^n} p_t(x,y)f(y)dy.
\label{e2.6}
\end{eqnarray}
 It follows from
(\ref{e2.1}) that
  the operators $P_tf$  is well defined. Following \cite{DXY}, we can define
  Campanato
  spaces    $\mathscr{L}_L^{p,\lambda}(\mathbb{R}^n)$ associated to an operator $L$ as follows.

 \begin{definition}\label{def2.1}
 Let    $1\leq p<\infty$ and $0<\lambda< n$. We say that
  a function $f\in \mathcal{M}_p$    belongs to the space
   $\mathscr{L}_L^{p,\lambda}(\mathbb{R}^n)$ associated to an operator $L$,
   if
\begin{align}\label{e2.7}
\|f\|_{\mathscr{L}_L^{p,\lambda}(\RN)}=\Big(\sup\limits_{B\subset \mathbb{R}^n}
r_B^{-\lambda}\int_B |f(x)-e^{-r^m_BL}f(x)|^p \ dx\Big)^{1/p}<\infty,
\end{align}
where $m$ is a fixed positive constant in \eqref{e2.1}.
\end{definition}

It can be verified that
  $\mathscr{L}_L^{p,\lambda}(\mathbb{R}^n)/ \mathcal{K}_{L,p}$ is a Banach space,
  where   $\mathcal{K}_{L,p}$ is  the kernel space and defined  by
\begin{align}\label{e2.8}
\mathcal{K}_{L,p}=\{f\in \mathcal{M}_p:\ e^{-tL}f(x)=f(x) \  {\rm for \ almost \ all}
\  x\in\mathbb{R}^n \ {\rm and \ all} \  t>0\}.
\end{align}
It is shown  in \cite{DXY} that
the space
  $\mathscr{L}_{-\Delta}^{p,\lambda}(\mathbb{R}^n)$ coincides with
$\mathscr{L}^{p,\lambda}(\mathbb{R}^n)$ for $0<\lambda< n$,and a
 necessary and sufficient condition for the classical space
 ${\mathscr L}^{p, \lambda}(\RN) \subseteq {\mathscr  L}_L^{p, \lambda}(\RN)$   with
 $
\|f\|_{{\mathscr  L}_L^{p, \lambda}(\RN)}\leq C\|f\|_{{\mathscr L}^{p, \lambda}(\RN)}
$
is that  for
every
$\
\! t>0$,
$e^{-tL}(1)=1$ almost everywhere.

Recall that the classical Campanato spaces were introduced in 1963
by S. Campanato \cite{Ca}, which are a generalization of the BMO spaces of functions of
bounded mean oscillation introduced by F. John and L. Nirenberg \cite{JN}.
Recall that for every   $1\leq p<\infty$ and $\lambda>0$,
the Campanato  space $\mathscr{L}^{p,\lambda}(\RN)$ is defined as
 \begin{align}\label{e1.4}
\mathscr{L}^{p,\lambda}(\RN)=\left\{ f\in L^p_{\rm loc}(\RN): \ \|f\|_{\mathscr{L}^{p,\lambda}(\RN)}
<\infty  \right\}
\end{align}
with the Campanato  seminorm being given by
\begin{align}\label{e1.5}
\|f\|_{\mathscr{L}^{p,\lambda}(\RN)}=\left(\sup\limits_{x\in\RN, \ r>0} r^{-\lambda} \int_{B(x,r)}
|f(y)-f_{B(x,r)}|^p \ dy\right)^{1/p}<\infty.
\end{align}
 The relationship between Morrey spaces
 and Campanato spaces is the following important result (see \cite{JTW,P}):
 \begin{itemize}
\item[(i)]  $\mathscr{L}^{p,\lambda}(\RN)/{\mathbb C}  = L^{p,\lambda}(\RN)$,
 when $\lambda\in [0, n)$ and ${\mathbb C}$ denotes the space of all constant functions;

\item[(ii)]  $\mathscr{L}^{p,n}(\RN)= {\rm BMO}(\RN)$, when $\lambda=n;$

\item[(iii)]  $\mathscr{L}^{p,\lambda}(\RN)/{\mathbb C} = {\rm Lip}_{\alpha}({\mathbb R}^n)$ with $\alpha=(\lambda-n)/p$,
 when $\lambda\in (n, n+p)  $  and   ${\rm Lip}_{\alpha}({\mathbb R}^n)$ denotes the homogeneous Lipschitz space  in $ \RN$.
 \end{itemize}

\subsection{The intrinsic relationship between Campanato spaces associated to  operators and classcial Morrey spaces}

 The  goal  of this subsection is to prove the following theorem.

\begin{theorem}\label{th3.1}
  For every  $1\leq p<\infty$ and $0<\lambda<n$,
Campanato   spaces ${\mathscr L}^{p,\lambda}_L(\RN)$
(modulo the kernel  spaces)   coincide  with the classical Morrey spaces $\M(\mathbb{R}^n)$, i.e.,
$$
\mathscr{L}_L^{p,\lambda}(\mathbb{R}^n)/\mathcal{K}_{L,p}={L}^{p,\lambda}(\mathbb{R}^n)
\quad {\rm and} \quad \|f\|_{\CL}\approx\|f-\lim\limits_{t\to +\infty} e^{-tL}f\|_{\M(\RN)},
$$
where $\mathcal{K}_{L,p}$ is given in \eqref{e2.8}.
More precisely, for every $f\in \CL$, the mapping $f\mapsto f-\lim\limits_{t\to +\infty}
e^{-tL}f$ is bijective and bicontinuous from $\mathscr{L}_L^{p,\lambda}(\mathbb{R}^n)/\mathcal{K}_{L,p}$
to ${L}^{p,\lambda}(\mathbb{R}^n)$.
\end{theorem}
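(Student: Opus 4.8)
The plan is to establish the bijection and norm equivalence by constructing the map carefully and then checking injectivity, surjectivity, and two-sided norm bounds separately. First I would verify that for $f\in\CL$ the limit $g:=\lim_{t\to+\infty}e^{-tL}f$ exists in a suitable sense: part (i) of Lemma~\ref{le2.2} shows that $\|e^{-tL}f-e^{-KtL}f\|_{L^\infty}\le C t^{(\lambda-n)/pm}\|f\|_{\CL}$, and since $(\lambda-n)/pm<0$ this gives a Cauchy condition as $t\to+\infty$ along dyadic scales $t=2^j$; combining with the semigroup property one upgrades this to convergence of $e^{-tL}f$ uniformly on $\RN$ to some bounded function $g$. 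The key structural fact is that $g\in\mathcal{K}_{L,p}$: by the semigroup law $e^{-sL}(e^{-tL}f)=e^{-(s+t)L}f$, and letting $t\to+\infty$ while using that $e^{-sL}$ is continuous on $L^\infty$ (or on the relevant $\mathcal{M}_p$ space via the kernel bound \eqref{e2.1}), one gets $e^{-sL}g=g$ for every $s>0$, so $g\in\mathcal{K}_{L,p}$ and $f-g$ is a well-defined representative modulo $\mathcal{K}_{L,p}$.

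Next, the embedding $\CL\hookrightarrow\M$ with $\|f-g\|_{\M}\le C\|f\|_{\CL}$: fix a ball $B=B(x_B,r_B)$ and write $f-g=(f-e^{-r_B^mL}f)+(e^{-r_B^mL}f-g)$. The first term is controlled in $L^p(B)$ by $r_B^{\lambda/p}\|f\|_{\CL}$ directly from the definition \eqref{e2.7}. For the second term, $e^{-r_B^mL}f-g = -\lim_{K\to\infty}(e^{-Kr_B^mL}f-e^{-r_B^mL}f)$, and part (i) of Lemma~\ref{le2.2} (applied with $t=r_B^m$, taking $K\to\infty$) bounds it in $L^\infty$ by $C r_B^{(\lambda-n)/p}\|f\|_{\CL}$, hence in $L^p(B)$ by $Cr_B^{n/p}\cdot r_B^{(\lambda-n)/p}\|f\|_{\CL}=Cr_B^{\lambda/p}\|f\|_{\CL}$. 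Taking the supremum over $B$ gives $\|f-g\|_{\M}\le C\|f\|_{\CL}$. This also shows the map is well-defined into $\M$ and that it kills exactly $\mathcal{K}_{L,p}$ (if $f\in\mathcal{K}_{L,p}$ then $f-g=f-f=0$; conversely if $f-g=0$ as an $\M$ element then $f=g\in\mathcal{K}_{L,p}$), giving injectivity on the quotient.

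For surjectivity and the reverse bound, given $h\in\M$, Lemma~\ref{le2.3} shows $h\in\CL$ with $\|h\|_{\CL}\le C\|h\|_{\M}$, and moreover $|e^{-tL}h(x)|\le Ct^{(\lambda-n)/pm}\|h\|_{\M}\to0$ as $t\to\infty$ since $\lambda<n$; thus the limit $g_h$ associated to $h$ is $0$, so $h\mapsto h-g_h=h$, proving the map is onto $\M$ and that $\|h\|_{\M}=\|h-g_h\|_{\M}\le C\|h\|_{\CL}$, i.e. the inverse is bounded. Chaining: for general $f\in\CL$, apply the last inequality to $h=f-g\in\M$, noting that $f-g$ and $f$ differ by the element $g\in\mathcal{K}_{L,p}$ so they have the same $\CL$-seminorm; this yields $\|f-g\|_{\M}\le C\|f\|_{\CL}$ one way and $\|f\|_{\CL}=\|f-g\|_{\CL}\le C\|f-g\|_{\M}$ the other way, which is the claimed equivalence $\|f\|_{\CL}\approx\|f-\lim_{t\to+\infty}e^{-tL}f\|_{\M}$. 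The main obstacle I anticipate is the first step — making rigorous sense of $\lim_{t\to+\infty}e^{-tL}f$ for $f$ only in $\mathcal{M}_p$ (not in $L^p$ or $L^\infty$), in particular justifying that $e^{-sL}$ commutes with this limit so that $g\in\mathcal{K}_{L,p}$; this requires care with the weighted spaces $\mathcal{M}_{(p;\beta)}$ and a dominated-convergence argument using the decay in \eqref{e2.1}, together with checking that the limit is independent of the representative and defines a genuine element of $\mathcal{K}_{L,p}$ rather than merely a pointwise a.e.\ limit.
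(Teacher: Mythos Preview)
Your proposal is correct and follows essentially the same route as the paper's proof. The paper packages the existence of $g=\lim_{t\to+\infty}e^{-tL}f$ and the fact that $g\in\mathcal{K}_{L,p}$ into a separate Lemma~\ref{le3.2} (proved exactly via the Cauchy argument from Lemma~\ref{le2.2}(i) that you outline, followed by a check that $g\in\mathcal{M}_p$ so that $e^{-sL}g$ is defined and the interchange $e^{-sL}\lim_t=\lim_t e^{-sL}$ is justified), and then proves the two norm inequalities with the same decomposition $f-g=(f-e^{-r_B^mL}f)+(e^{-r_B^mL}f-g)$ and the same use of Lemma~\ref{le2.3} to show $g=0$ when $f\in L^{p,\lambda}$; the obstacle you flag is precisely the content of that lemma.
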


The proof of Theorem~\ref{th3.1} is based the following lemmas.

\begin{lemma}\label{le2.3}
 Let $1\leq p<\infty$ and $0<\lambda<n$. Suppose  $f\in \M(\mathbb{R}^n)$. Then
 for every $t>0 $ and $x\in \RN,$  $|e^{-tL}f(x)|\leq Ct^{\lambda-n\over pm}\|f\|_{\M(\RN)}.$
As a consequence, we have
\begin{align*}
f\in \CL  \quad  \quad  {\rm and} \quad \quad \|f\|_{\CL}\leq C\|f\|_{\M(\RN)}.
\end{align*}
\end{lemma}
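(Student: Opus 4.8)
The plan is to prove the pointwise bound $|e^{-tL}f(x)|\le Ct^{(\lambda-n)/(pm)}\|f\|_{\M(\RN)}$ first — this is the substantive part — and then deduce $f\in\CL$ together with $\|f\|_{\CL}\le C\|f\|_{\M(\RN)}$ from it in a few lines. Before anything else I would check that $f\in\M(\RN)$ indeed lies in $\mathcal{M}_p$, so that $e^{-tL}f(x)$ in \eqref{e2.6} is an absolutely convergent integral: chopping $\RN$ into the unit ball and the dyadic shells $\{2^{j-1}\le|x|<2^j\}$, $j\ge1$, one has $(1+|x|)^{-(n+\beta)}\approx 2^{-j(n+\beta)}$ on the $j$-th shell while $\int_{B(0,2^j)}|f|^p\le 2^{j\lambda}\|f\|_{\M(\RN)}^p$ by the Morrey condition, so $\int_{\RN}|f(x)|^p(1+|x|)^{-(n+\beta)}\,dx\le C\sum_{j\ge0}2^{-j(n+\beta-\lambda)}\|f\|_{\M(\RN)}^p<\infty$ for every $\beta>0$, the series converging since $\lambda<n$; hence $f\in\mathcal{M}_p$ for every admissible $\beta$.

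For the pointwise estimate, fix $t>0$ and $x\in\RN$ and split $\int p_t(x,y)f(y)\,dy$ over $A_0=B(x,t^{1/m})$ and the annuli $A_k=B(x,2^kt^{1/m})\setminus B(x,2^{k-1}t^{1/m})$, $k\ge1$. On $A_k$ the kernel bound \eqref{e2.1} gives $|p_t(x,y)|\le Ct^{-n/m}2^{-k(n+\epsilon)}$, and combining this with H\"older's inequality and the Morrey bound $\big(\int_{B(x,2^kt^{1/m})}|f|^p\big)^{1/p}\le (2^kt^{1/m})^{\lambda/p}\|f\|_{\M(\RN)}$ gives
\begin{align*}
\int_{A_k}|p_t(x,y)|\,|f(y)|\,dy
&\le C\,t^{-n/m}\,2^{-k(n+\epsilon)}\,\big(2^kt^{1/m}\big)^{n(1-\frac1p)}\big(2^kt^{1/m}\big)^{\lambda/p}\|f\|_{\M(\RN)}\\
&= C\,t^{\frac{\lambda-n}{pm}}\,2^{-k(\epsilon+\frac{n-\lambda}{p})}\|f\|_{\M(\RN)},
\end{align*}
since the exponent of $t^{1/m}$ collapses to $-n/p+\lambda/p$ and the exponent of $2^k$ is $-\epsilon-(n-\lambda)/p<0$. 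Summing over $k\ge0$, the geometric series converges and yields $|e^{-tL}f(x)|\le Ct^{\frac{\lambda-n}{pm}}\|f\|_{\M(\RN)}$.

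For the consequence, fix a ball $B=B(x_B,r_B)$, use $|f-e^{-r_B^mL}f|^p\le 2^{p-1}\big(|f|^p+|e^{-r_B^mL}f|^p\big)$ and multiply by $r_B^{-\lambda}$: the first term contributes at most $2^{p-1}\|f\|_{\M(\RN)}^p$ by definition of the Morrey norm, while the pointwise bound with $t=r_B^m$ gives $|e^{-r_B^mL}f(x)|\le Cr_B^{(\lambda-n)/p}\|f\|_{\M(\RN)}$ for a.e.\ $x$, so $r_B^{-\lambda}\int_B|e^{-r_B^mL}f|^p\,dx\le Cr_B^{-\lambda}\,r_B^{n}\,r_B^{\lambda-n}\|f\|_{\M(\RN)}^p=C\|f\|_{\M(\RN)}^p$. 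Taking the supremum over $B$ and a $p$-th root gives $\|f\|_{\CL}\le C\|f\|_{\M(\RN)}<\infty$, hence $f\in\CL$. The whole computation is elementary; the only step needing care is the bookkeeping of exponents in the annular sum — checking that the power of $t^{1/m}$ is exactly $(\lambda-n)/(pm)$ and that the power of $2^k$ is strictly negative — which is precisely where $\lambda<n$ and $\epsilon>0$ are used, and I do not expect any genuine obstacle.
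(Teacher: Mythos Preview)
Your proof is correct and follows essentially the same approach as the paper: a dyadic annular decomposition of the kernel integral, H\"older's inequality combined with the Morrey bound on each annulus, and summation of the resulting geometric series, followed by the triangle-inequality argument for the Campanato norm. The only cosmetic differences are that you verify $f\in\mathcal{M}_p$ in more detail and place it first, and that the paper starts the annuli at radius $2t^{1/m}$ rather than $t^{1/m}$; neither affects the substance.
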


\begin{proof}  Let  $f\in \M(\mathbb{R}^n)$. From  kernel estimate \eqref{e2.1} of the semigroup $e^{-tL}$, we have
\begin{eqnarray*}
|e^{-tL}f(x)|&\leq& Ct^{-n/m} \int_{\RN} \left( 1 +{\frac{|x-y|}{t^{1/m}}}\right)^{-(n+\epsilon)} |f(y)|dy\nonumber\\
&\leq& Ct^{-n/m}\int_{B(x, 2t^{1/m})}   |f(y)|dy
 +Ct^{-n/m}\sum_{k=2}^{\infty}\int_{B(x, 2^{k}t^{1/m})\backslash
 B(x, 2^{k-1}t^{1/m})} \left( 1 +{\frac{|x-y|}{t^{1/m}}}\right)^{-(n+\epsilon)} |f(y)|dy\nonumber\\
&\leq&
  C\sum_{k=1}^{\infty} 2^{-k(n+\epsilon)}t^{-n/m} \int_{ B(x, 2^{k}t^{1/m})}
 |f(y)|dy.
 \end{eqnarray*}
By  H\"older's inequality, we  obtain that for every $k\in{\mathbb N},$
 \begin{eqnarray*}
 t^{-n/m}\int_{ B(x, 2^{k}t^{1/m})}  |f(y)|dy
&\leq&Ct^{-n/m}(2^kt^{1/m})^{(1-{1\over p})n+ {\lambda\over p}}
\left({1\over (2^kt^{1/m})^{\lambda}} \int_{B(x, 2^{k}t^{1/m})}   |f(y)|^pdy\right)^{1/p}\nonumber\\
&\leq&
C2^{(n+{\lambda-n\over p})k}t^{\lambda-n\over pm}\|f\|_{\M(\RN)},
\end{eqnarray*}
which gives
\begin{eqnarray}\label{e2.10}
|e^{-tL}f(x)| \leq
  C\sum_{k=1}^{\infty} 2^{-k(n+\epsilon)} 2^{(n+{\lambda-n\over p})k}
  t^{\lambda-n\over pm}\|f\|_{\M(\RN)}
 \leq    C t^{\lambda-n\over pm}\|f\|_{\M(\RN)}.
 \end{eqnarray}

It follows from  $f\in \M(\mathbb{R}^n)$  that  $
\int_{\mathbb{R}^n}  {|f(x)|^p}{(1+|x|)^{-(n+{\beta})}}\, dx\leq C\|f\|_{\M(\RN)}^p
 $
  for any ${\beta}>0$, and so
   $f\in  \mathcal{M}_p$.
Note that for any  ball $B=B(x_B, r_B)\subset \mathbb{R}^n$,  we apply estimate (\ref{e2.10}) to obtain
\begin{eqnarray*}
\left(\frac{1}{r_B^\lambda}\int_B |f-e^{-r_B^m L}f(x)|^p \, dx \right)^{1/p}
&\leq&  \big\|f\big\|_{\M(\RN)} +
\left(\frac{1}{r_B^\lambda}\int_B | e^{-r_B^m L}f(x)|^p \, dx \right)^{1/p}\\
&\leq&  C\big\|f\big\|_{\M(\RN)}.
\end{eqnarray*}
Taking the supremum over all the balls $B$ in $\RN$, we have that  $\|f\|_{\CL}\leq C\|f\|_{\M(\RN)}.$
The proof of Lemma~\ref{le2.3} is complete.
\end{proof}

\begin{lemma}\label{le2.2}
Let $1\leq p<\infty$ and $0<\lambda<n$.  Suppose $f\in \CL$. Then:
  \begin{itemize}
\item[(i)] For any $t>0$ and $K>1$, there exists  $C>0$ independent of $t$ and $K$ such that
\begin{align*}
\|e^{-tL}f(x)-e^{-KtL}f(x)\|_{L^\infty(\RN)}\leq C\ t^{\frac{\lambda-n}{pm}}\big\|f\big\|_{\CL}.
\end{align*}

\item[(ii)]For any $\delta>0$, there exists $C(\delta)>0$ such that

\begin{align*}
\int_{\mathbb{R}^n} \frac{|e^{-tL}f(x)-f(x)|^p}{\left(t^{1/m}+|x|\right)^{n+\delta}} \,dx\leq C(\delta)
t^{-(n-\lambda+\delta)/m}\big\|f\big\|_{\CL}^p.
\end{align*}
\end{itemize}
\end{lemma}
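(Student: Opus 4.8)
The plan is to extract from Definition~\ref{def2.1} only its single-scale content: for a \emph{fixed} $r>0$ and \emph{any} ball $B$ with $r_B=r$, the definition of $\|f\|_{\CL}$ gives at once
\begin{align*}
\int_B\big|f(x)-e^{-r^mL}f(x)\big|^p\,dx\le r^{\lambda}\,\|f\|_{\CL}^p ,
\end{align*}
so $f-e^{-r^mL}f$ behaves like a Morrey function at the single scale $r$. I will combine this with a single-scale version of the pointwise bound \eqref{e2.10} from Lemma~\ref{le2.3}: if $w\in\mathcal{M}_p$ satisfies $\sup_{B:\,r_B=r_0}r_0^{-\lambda}\int_B|w|^p\le A^p$, then
\begin{align*}
\big|e^{-R^mL}w(x)\big|\le C\,r_0^{(\lambda-n)/p}A\qquad\text{for all }R\ge r_0 \text{ and all }x\in\RN ,
\end{align*}
with $C$ independent of $R$, $r_0$, $x$. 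This is obtained exactly as \eqref{e2.10}: decompose $\RN$ into the dyadic annuli $B(x,2^kR)\setminus B(x,2^{k-1}R)$, cover each by $\lesssim 2^{kn}(R/r_0)^n$ balls of radius $r_0$, apply H\"older on each using the hypothesis on $w$, insert $|p_{R^m}(x,y)|\lesssim R^{-n}(1+|x-y|/R)^{-(n+\epsilon)}$, and sum $\sum_k2^{-k\epsilon}$; the point is that the surviving power is $r_0^{(\lambda-n)/p}$, with no leftover dependence on $R$.

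Part (ii) is then a covering argument. Write $\RN=B(0,t^{1/m})\cup\bigcup_{j\ge1}A_j$ with $A_j=B(0,2^jt^{1/m})\setminus B(0,2^{j-1}t^{1/m})$, so that $t^{1/m}+|x|\approx t^{1/m}$ on the central ball and $t^{1/m}+|x|\approx 2^jt^{1/m}$ on $A_j$. Covering $B(0,2^jt^{1/m})$ by $\lesssim 2^{jn}$ balls of radius $t^{1/m}$ and applying $\int_B|f-e^{-tL}f|^p\le t^{\lambda/m}\|f\|_{\CL}^p$ on each gives $\int_{A_j}|f-e^{-tL}f|^p\lesssim 2^{jn}t^{\lambda/m}\|f\|_{\CL}^p$; hence the $A_j$-contribution to the integral is $\lesssim 2^{-j\delta}\,t^{(\lambda-n-\delta)/m}\|f\|_{\CL}^p$, and summing the convergent series $\sum_{j\ge1}2^{-j\delta}=:C(\delta)$ (together with the $j=0$ term over the central ball) yields the claim, since $(\lambda-n-\delta)/m=-(n-\lambda+\delta)/m$.

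For part (i) I would reduce everything to the ratio $K=2$. First, $e^{-tL}f-e^{-2tL}f=e^{-tL}(f-e^{-tL}f)$, and the single-scale estimate with $w=f-e^{-tL}f$, $r_0=R=t^{1/m}$, $A=\|f\|_{\CL}$ gives $\|e^{-tL}f-e^{-2tL}f\|_{L^\infty(\RN)}\le Ct^{(\lambda-n)/(pm)}\|f\|_{\CL}$. Telescoping over dyadic times and summing the geometric series $\sum_{j\ge0}\big(2^{(\lambda-n)/(pm)}\big)^j$ (convergent since $\lambda<n$) gives $\|e^{-tL}f-e^{-2^NtL}f\|_{L^\infty(\RN)}\le C t^{(\lambda-n)/(pm)}\|f\|_{\CL}$ uniformly in $N$. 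Second, for a ratio $K\in[1,2]$ and arbitrary base time $s$ I keep the lag time comparable to $s$: if $K\in[3/2,2]$ use $e^{-sL}f-e^{-KsL}f=e^{-sL}\big(f-e^{-(K-1)sL}f\big)$ with the single-scale estimate for $r_0=((K-1)s)^{1/m}\in[(s/2)^{1/m},s^{1/m}]$, $R=s^{1/m}$; if $K\in[1,3/2]$ write $e^{-sL}f-e^{-KsL}f=\big(e^{-sL}f-e^{-2sL}f\big)+e^{-KsL}\big(e^{-(2-K)sL}f-f\big)$, bounding the first term by the $K=2$ case and the second by the single-scale estimate for $r_0=((2-K)s)^{1/m}\in[(s/2)^{1/m},s^{1/m}]$, $R=(Ks)^{1/m}\ge s^{1/m}\ge r_0$. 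In all cases $\|e^{-sL}f-e^{-KsL}f\|_{L^\infty(\RN)}\le C_1 s^{(\lambda-n)/(pm)}\|f\|_{\CL}$ with $C_1$ independent of $s$ and of $K\in[1,2]$. Finally, for arbitrary $K>1$, choose $N\ge0$ with $2^N\le K<2^{N+1}$, split $e^{-tL}f-e^{-KtL}f=\big(e^{-tL}f-e^{-2^NtL}f\big)+\big(e^{-2^NtL}f-e^{-KtL}f\big)$, bound the first summand by the telescoping estimate, and observe the second equals $e^{-sL}f-e^{-(K/2^N)sL}f$ with $s=2^Nt$, $K/2^N\in[1,2)$, hence is $\le C_1(2^Nt)^{(\lambda-n)/(pm)}\|f\|_{\CL}\le C_1 t^{(\lambda-n)/(pm)}\|f\|_{\CL}$ since $2^N\ge1$ and the exponent is negative.

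The semigroup identities $e^{-aL}e^{-bL}=e^{-(a+b)L}$ used above are applied on $\mathcal{M}_p$, which is legitimate because $e^{-\sigma L}$ maps $\mathcal{M}_p$ into itself by \eqref{e2.1}; in particular each $f-e^{-\sigma L}f$ lies in $\mathcal{M}_p$ and may be fed into the single-scale estimate. The step I expect to be the real obstacle is the uniformity of $C$ in $K$ in part (i): the naive expansion $e^{-tL}f-e^{-KtL}f=e^{-tL}(f-e^{-(K-1)tL}f)$ produces a factor $((K-1)t)^{(\lambda-n)/(pm)}$ that blows up as $K\to1^+$ (and similarly near every $K=2^j$), reflecting the fact that $e^{-tL}f$ itself is controlled only modulo the unbounded directions in $\mathcal{K}_{L,p}$. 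Routing every ratio through $K=2$, so that the effective lag time never falls below a fixed multiple of the base time, is the device that removes this blow-up; everything else is a mechanical repetition of the computation behind \eqref{e2.10}.
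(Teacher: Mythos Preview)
Your proof is correct. The paper does not actually give a self-contained argument for this lemma---it simply refers both parts to \cite[Lemma~3]{DXY}---so there is nothing to compare line by line; your approach (the single-scale Morrey-type bound for $w=f-e^{-sL}f$ obtained exactly as in \eqref{e2.10}, followed by dyadic telescoping in time for (i) and a dyadic annular decomposition in space for (ii)) is the standard one and is what the argument in \cite{DXY} amounts to. Your careful treatment of the uniformity in $K$, routing every ratio through $K=2$ so that the lag time $(K-1)t$ never drops below a fixed multiple of the base time, is precisely the device that makes (i) work and is the one point where a naive argument would fail.
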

\begin{proof}
For the proof of (i), we refer to (i) of \cite[Lemma ~3]{DXY}.
The proof of (ii) can be obtained by making minor modifications with that of
(ii) of \cite[Lemma ~3]{DXY}, and we skip it.
\end{proof}

\begin{lemma}\label{le3.2}
Let $1\leq p<\infty$ and $0<\lambda<n$. For every $f\in \CL$, there exists a function
$  \sigma_L(f)\in \mathcal{M}_p$
such that $\lim\limits_{t\to +\infty} \|e^{-tL}f-\sigma_L(f)\|_{L^\infty(\RN)}=0$.
 Moreover, $\sigma_L(f)\in \mathcal{K}_{L,p}$.
\end{lemma}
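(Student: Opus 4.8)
The plan is to show that the net $\{e^{-tL}f\}_{t>0}$ is Cauchy in $L^\infty(\RN)$ as $t\to+\infty$, which by completeness produces a limit function $\sigma_L(f)\in L^\infty(\RN)$, and then to check that $\sigma_L(f)$ lies in $\mathcal{M}_p$ and is fixed by the whole semigroup. First I would fix a sequence $t_j=2^j$ and write $e^{-t_{j+1}L}f-e^{-t_jL}f = e^{-2^{j+1}L}f-e^{-2\cdot 2^jL}f$, so that part (i) of Lemma~\ref{le2.2} (applied with $t=2^j$ and $K=2$) gives
\begin{align*}
\|e^{-t_{j+1}L}f-e^{-t_jL}f\|_{L^\infty(\RN)}\leq C\,(2^j)^{\frac{\lambda-n}{pm}}\|f\|_{\CL}.
\end{align*}
Since $0<\lambda<n$, the exponent $\frac{\lambda-n}{pm}$ is strictly negative, so the right-hand side is a convergent geometric series in $j$; hence $\{e^{-t_jL}f\}_j$ is Cauchy in $L^\infty(\RN)$. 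For a general $t\to+\infty$, pick $j$ with $2^j\le t<2^{j+1}$ and apply Lemma~\ref{le2.2}(i) once more with that $t$ and $K=2^{j+1}/t\in[1,2)$ to control $\|e^{-tL}f-e^{-2^{j+1}L}f\|_{L^\infty}$ by $C\,t^{\frac{\lambda-n}{pm}}\|f\|_{\CL}$, which $\to0$; combining with the Cauchy property of the dyadic sequence shows the full net converges in $L^\infty(\RN)$ to a function I call $\sigma_L(f)$, and $\lim_{t\to+\infty}\|e^{-tL}f-\sigma_L(f)\|_{L^\infty(\RN)}=0$.

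Next I would verify $\sigma_L(f)\in\mathcal{M}_p$. Since $\sigma_L(f)\in L^\infty(\RN)$, for any $\beta>0$ we have
\begin{align*}
\int_{\RN}\frac{|\sigma_L(f)(x)|^p}{(1+|x|)^{n+\beta}}\,dx
\leq \|\sigma_L(f)\|_{L^\infty(\RN)}^p\int_{\RN}\frac{dx}{(1+|x|)^{n+\beta}}<\infty,
\end{align*}
so $\sigma_L(f)\in\mathcal{M}_{(p;\beta)}$ for every $\beta>0$; in particular it belongs to $\mathcal{M}_p$ whether $\Theta(L)$ is finite or infinite. (One can also quantify $\|\sigma_L(f)\|_{L^\infty}$: by Lemma~\ref{le2.2}(i), $\|e^{-tL}f\|_{L^\infty}$ and hence the limit is controlled, but only a finite bound is needed here.)

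Finally I would show $\sigma_L(f)\in\mathcal{K}_{L,p}$, i.e. $e^{-sL}\sigma_L(f)=\sigma_L(f)$ a.e. for every $s>0$. Fix $s>0$. Using the semigroup property, $e^{-sL}e^{-tL}f=e^{-(s+t)L}f$, so
\begin{align*}
\|e^{-sL}\sigma_L(f)-\sigma_L(f)\|_{L^\infty(\RN)}
&\leq \|e^{-sL}\big(\sigma_L(f)-e^{-tL}f\big)\|_{L^\infty(\RN)}
+\|e^{-(s+t)L}f-\sigma_L(f)\|_{L^\infty(\RN)}.
\end{align*}
The second term $\to0$ as $t\to+\infty$ by the definition of $\sigma_L(f)$. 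For the first term I would use the kernel bound \eqref{e2.1}: since $\int_{\RN}|p_s(x,y)|\,dy\leq C_s<\infty$ uniformly in $x$, the operator $e^{-sL}$ is bounded on $L^\infty(\RN)$ with norm $\leq C_s$, so the first term is $\leq C_s\|\sigma_L(f)-e^{-tL}f\|_{L^\infty(\RN)}\to0$ as well. Letting $t\to+\infty$ forces $\|e^{-sL}\sigma_L(f)-\sigma_L(f)\|_{L^\infty(\RN)}=0$, i.e. $e^{-sL}\sigma_L(f)=\sigma_L(f)$ a.e. Since $s>0$ was arbitrary, $\sigma_L(f)\in\mathcal{K}_{L,p}$, completing the proof. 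The only mild subtlety — and the step I would be most careful about — is the justification that $e^{-sL}$ is a bounded operator on $L^\infty(\RN)$ and that it interchanges with the $L^\infty$-limit defining $\sigma_L(f)$; both follow cleanly from \eqref{e2.1}, since $L^\infty$-convergence plus a uniform $L^1$-in-$y$ kernel bound yields convergence of $e^{-sL}(e^{-tL}f)$ to $e^{-sL}(\sigma_L(f))$ pointwise (indeed uniformly).
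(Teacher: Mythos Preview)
Your construction of $\sigma_L(f)$ via the Cauchy property and your verification that $e^{-sL}\sigma_L(f)=\sigma_L(f)$ are both fine and match the paper's argument. The gap is in the step where you claim $\sigma_L(f)\in L^\infty(\RN)$ and deduce $\sigma_L(f)\in\mathcal{M}_p$ from that. Lemma~\ref{le2.2}(i) only controls \emph{differences} $\|e^{-tL}f-e^{-KtL}f\|_{L^\infty}$; it gives no bound on $\|e^{-tL}f\|_{L^\infty}$ itself, and in general neither $e^{-tL}f$ nor $\sigma_L(f)$ is bounded. A concrete obstruction: take $L=-\Delta$ and $f(x)=x_1$. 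Then $f\in\mathcal{M}_p$ (choose $\beta>p$, admissible since $\Theta(-\Delta)=\infty$), and $e^{t\Delta}x_1=x_1$ for all $t$, so $f\in\mathscr{L}_{-\Delta}^{p,\lambda}(\RN)$ with vanishing seminorm and $\sigma_L(f)=x_1\notin L^\infty(\RN)$. Thus ``$\sigma_L(f)\in L^\infty$'' is false and your $\mathcal{M}_p$ argument collapses.

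The paper handles this by writing
\[
|\sigma_L(f)(x)|\leq |\sigma_L(f)(x)-e^{-t_0L}f(x)|+|e^{-L}f(x)-e^{-t_0L}f(x)|+|f(x)-e^{-L}f(x)|+|f(x)|
\]
for a fixed large $t_0$. The first two terms are genuinely in $L^\infty$ (by the uniform convergence you proved and by Lemma~\ref{le2.2}(i)), the third term is placed in the weighted space via Lemma~\ref{le2.2}(ii), and the last term uses $f\in\mathcal{M}_p$ directly. This telescoping back to $f$ is the missing idea: you must ultimately lean on the membership $f\in\mathcal{M}_p$, since nothing else in the hypotheses provides the needed decay at infinity.
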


\begin{proof}
Let $f\in \CL$. For any $0<t<s$, we apply (i) of Lemma \ref{le2.2} to obtain
\begin{align*}
|e^{-tL}f(x)-e^{-sL}f(x)|\leq C \, t^{\frac{\lambda-n}{pm}}\|f\|_{\CL},
\end{align*}
for some constant  $C$   independent of  $t, s$ and $x$, and thus
$|e^{-tL}f(x)-e^{-sL}f(x)| \to 0$  uniformly   as   $t\to +\infty$. By the completeness of $\mathbb{R}^n$, there
exists a function $\sigma_L(f)(x)=\lim\limits_{t\to +\infty}  e^{-tL}f(x) $  such that
\begin{align}\label{e3.1}
\lim\limits_{t\to +\infty} \|e^{-tL}f-\sigma_L(f)\|_{L^\infty(\RN)}=0.
\end{align}

Let us  prove  that $\sigma_L(f)\in \mathcal{M}_p$. Set
\begin{align*}
{\beta_0}:=\left \{
\begin{array}{ll}
\Theta(L),\ \ \ \ \ \  \  \quad \quad \quad \qquad \qquad {\rm if } \ {\Theta }(L) <\infty;\\\\
 {\rm any \ positive \ number },\ \ \ \ \ \ \ \ \ \ \ {\rm if} \ {\Theta }(L)=\infty.
\end{array}
\right.
\end{align*}
Using this notation, we have that
$f\in \mathcal{M}_{p, \beta}$, for any $\beta\in(0,\beta_0)$.
It suffices to show that
\begin{align}\label{e3.2}
\int_{{\mathbb R}^n} \frac{|\sigma_Lf(x)|^p}{(1+|x|)^{n+{\beta}}}\, dx
 <\infty.
\end{align}
To prove \eqref{e3.2}, we   choose  some   $t_0>1$ in \eqref{e3.1}   large enough  such that
 $\|e^{-t_0L}f-\sigma_L(f)\|_{L^\infty(\RN)}\leq C.$ One writes
$$|\sigma_Lf(x)|\leq  |\sigma_Lf(x)-e^{-t_0L}f(x)| +|e^{-L}f(x)-e^{-t_0L}f(x)| + |f(x)-e^{-L}f(x)| +|f(x)|.
$$
From (i) of Lemma~\ref{le2.2},
$\|e^{-L}f(x)-e^{-t_0L}f(x)\|_{L^\infty(\RN)}\leq C \  \big\|f\big\|_{\CL}.$ This, in combination with
(ii) of Lemma~\ref{le2.2} and the fact that  $f\in \mathcal{M}_{p, \beta}$, shows
\begin{align} \label{e3.3}
\int_{{\mathbb R}^n} \frac{|\sigma_Lf(x)|^p}{(1+|x|)^{n+{\beta}}}\, dx&\leq  C+ C\big\|f\big\|_{\CL}
 + C\int_{{\mathbb R}^n} \frac{|f(x)|^p}{(1+|x|)^{n+{\beta}}}\, dx <\infty,
\end{align}
and hence
 $\sigma_Lf\in \mathcal{M}_p.$

Finally, we  apply \eqref{e3.1} to obtain that  for any $s>0$,
\begin{align*}
e^{-sL}\sigma_L{f}(x)&=e^{-sL}\lim\limits_{t\to +\infty} e^{-tL}f(x)
 =\lim\limits_{t\to +\infty}e^{-sL}e^{-tL}f(x)
 =\lim\limits_{t\to +\infty}e^{-(s+t)L}f(x)=\sigma_L{f}(x)
\end{align*}
for almost all $x\in\RN$, which yields that  $\sigma_L(f)\in \mathcal{K}_{L,p}$.
This completes  proof of Lemma~\ref{le3.2}.
\end{proof}

\bigskip

\noindent {\it Proof of Theorem ~\ref{th3.1}.}  Suppose that $f\in \M(\mathbb{R}^n)$.
From  Lemma \ref{le2.3}, we have that
  $f\in \CL  $ and $ \|f\|_{\CL}\leq C\|f\|_{\M(\RN)}.
 $ Next we prove that
\begin{align}\label{e3.4}
f-\sigma_L(f)\in \CL   \quad  {\rm and } \quad  \|f-\sigma_L(f)\|_{\CL}\leq C\|f\|_{\M(\RN)},
\end{align}
and it reduces to show  that $\sigma_L(f)(x)=0$, a.e. $x\in\RN$.
For any $t>0$ and almost everywhere $x\in \RN$, we use Lemma~\ref{le2.3}
  to obtain
\begin{align*}
|e^{-t^mL}(f)(x)|
  \leq Ct^{(\lambda-n)/p}\|f\|_{\M}.
\end{align*}
Since $\lambda\in (0, n)$, we have
$$
|\sigma_L(f)(x)|\leq \lim_{t\to +\infty}|e^{-tL}(f)(x)|
=0,  \quad {\rm for \ a.e.\ } x\in \mathbb{R}^n
$$
and \eqref{e3.4} holds.

For the converse part of Theorem~\ref{th3.1}, we will show that for any $f\in \CL$,
  there exists constant $C>0$  such that for any ball $B\subset \mathbb{R}^n$,
\begin{align}\label{e3.11}
\Big(\frac{1}{r_B^{\lambda}}\int_B |f(x)-\sigma_L(f)(x)|^p \, dx\Big)^{1/p}\leq C \,\|f\|_{\CL}.
\end{align}
Indeed, one can write
\begin{align}\label{e3.12}
 \left(\frac{1}{r_B^{\lambda}}\int_B |f(x)-\sigma_L(f)(x)|^p \, dx\right)^{1/p}
\leq &\|f\|_{\CL}
+\left(\frac{1}{r_B^{\lambda}}\int_B |e^{-r_B^mL}f(x)-\sigma_L(f)(x)|^p \, dx\right)^{1/p}.
\end{align}
By Lemmas~\ref{le2.2} and ~\ref{le3.2},
\begin{eqnarray*}
\|e^{-r_B^mL}f(x)-\sigma_L(f)(x)\|_{L^\infty(\RN)}&\leq& \lim_{k\to +\infty}\|e^{-r_B^mL}f(x)
-e^{-kr_B^mL}f(x)\|_{L^\infty(\RN)} \\
&+&\lim_{k\to +\infty}\|
 e^{-kr_B^mL}f(x)-\sigma_L(f)(x)\|_{L^\infty(\RN)}\\
&\leq& Cr_B^{\lambda-n\over p}\|f\|_{\CL}.
\end{eqnarray*}
This, in combination with  \eqref{e3.12}, shows the desired estimate \eqref{e3.11}.
Hence, the proof of Theorem~\ref{th3.1} is complete.

\medskip

\subsection{Campanato spaces associated to Schr\"odinger operators and classical Morrey spaces are equivalent}

 In this subsection, let us consider the Schr\"odinger operators as (\ref{e4.1})
\begin{align*}
L=-\Delta+V(x) \qquad  {\rm on} \  \RN, \ \quad  n\geq 3,
\end{align*}
 where $V\in L^1_{\rm loc}(\RN)$ and $V\not\equiv 0$ is  a non-negative function in
$ B_q$ for some $q\geq n.$
For such Schr\"odinger operator $L$,
   the semigroup kernels ${\mathcal P}_t(x,y)$  of the operators $e^{-t\sqrt{L}}$ satisfy the following properties:

\begin{lemma}\label{size}   For  every $N>0$, there exists
 a constant $C=C_{N}$ such that for $x,y\in \RR$ and $t>0,$

\begin{align}\label{e4.3}
 \quad | {\mathcal P}_t(x,y)|\leq C{t \over (t^2+|x-y|^2)^{n+1\over 2}} \left(1+ {(t^2+|x-y|^2)^{1/2}\over \rho(x)}+{(t^2+|x-y|^2)^{1/2}\over \rho(y)}\right)^{-N};
 \end{align}

\begin{align}\label{e4.4}
 \quad  |t\nabla {\mathcal P}_t(x,y)|\leq C {t  \over (t^2+|x-y|^2)^{n+1\over 2}} \left(1+ {(t^2+|x-y|^2)^{1/2}\over \rho(x)}
+{(t^2+|x-y|^2)^{1/2}\over \rho(y)}\right)^{-N}.
\end{align}
\noindent Here,
\begin{equation*}
 \rho(x)=\sup \Big{\{}r>0: \ {1\over r^{n-2}} \int_{B(x,r)} V(y)dy \leq 1 \Big{\}}.
\end{equation*}
\end{lemma}
\begin{proof}
 For the proof,  see Proposition 3.6, \cite{MSTZ}.
\end{proof}

 As an application of  Theorem \ref{th3.1},  we have  the following result.

\begin{theorem}\label{th4.1}  For $0<\lambda<n, 1\leq p<\infty$,  there holds
$$\mathscr{L}_L^{p,\lambda}(\mathbb{R}^n)= \mathscr{L}_{\sqrt{L}}^{p,\lambda}(\mathbb{R}^n)
= {L}^{p,\lambda}(\mathbb{R}^n).
$$
\end{theorem}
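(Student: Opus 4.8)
The plan is to reduce everything to Theorem \ref{th3.1}. By that theorem, for any operator $L$ satisfying the standing heat-kernel bound \eqref{e2.1}, one has $\mathscr{L}_L^{p,\lambda}(\RN)/\mathcal{K}_{L,p}=L^{p,\lambda}(\RN)$. So the heart of the matter is to verify two things: first, that both $L=-\Delta+V$ (with $m=2$) and $\sqrt{L}$ (with $m=1$) fall under the hypotheses of Theorem \ref{th3.1}; second, that the kernel spaces are trivial, $\mathcal{K}_{L,p}=\mathcal{K}_{\sqrt{L},p}=\{0\}$. Once $\mathcal{K}_{L,p}=\{0\}$, Theorem \ref{th3.1} gives $\mathscr{L}_L^{p,\lambda}(\RN)=L^{p,\lambda}(\RN)$ outright (the quotient is trivial), and similarly for $\sqrt{L}$; chaining the two identities yields the displayed chain.

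For the first point: the Gaussian bound \eqref{e4.1} for $e^{-tL}$ immediately implies \eqref{e2.1} with $m=2$ and any $\epsilon>0$ (Gaussian decay dominates any polynomial). For $\sqrt{L}$, the Poisson-type bound \eqref{e4.3} is exactly \eqref{e2.1} with $m=1$ and $\epsilon=1$. So both operators are admissible, and Definition \ref{def2.1} and Lemmas \ref{le2.3}, \ref{le2.2} apply to each.

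For the second (and main) point, triviality of the kernel space: I would take $f\in\mathcal{K}_{L,p}$, so $e^{-tL}f=f$ a.e. for all $t>0$, and show $f=0$. The idea is that $f$ must then be a fixed point of the whole semigroup, hence (formally) $Lf=0$, i.e. $f$ is $L$-harmonic; but for a Schrödinger operator with $V\not\equiv 0$, $V\ge 0$, a Liouville-type/positivity argument should force $f\equiv 0$ within the growth class $\mathcal{M}_p$. Concretely, since $f=e^{-tL}f=e^{-tL}e^{-tL}f=\dots$, one has $f=e^{-tL}f$ for all $t$, and one can test against the positivity of the semigroup: writing $g_t:=e^{-tL}|f|$, the domination $|e^{-tL}f|\le e^{tL_0}|f|$ where $L_0=-\Delta$ is not quite enough, so instead I would use the fact, available from the reverse-Hölder structure (recall the auxiliary function $m(x,V)$ of Shen and the exponential-decay estimates underlying \eqref{e4.3}-\eqref{e4.4}), that $e^{-tL}1(x)\to 0$ as $t\to\infty$ — in fact $e^{-tL}1$ decays because $V$ acts as a genuine killing term. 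More carefully: from Lemma \ref{le2.3} applied with $f\in\mathcal{M}_p\cap\mathscr{L}^{p,\lambda}_L$ one already knows $|e^{-tL}f(x)|\le Ct^{(\lambda-n)/(pm)}\|f\|$ whenever $f\in L^{p,\lambda}$; but a priori $f\in\mathcal{K}_{L,p}$ need not lie in $L^{p,\lambda}$. So I would instead argue directly: for $f\in\mathcal{K}_{L,p}\subset\mathcal{M}_p$, fix a large ball $B$ and split $f=f\mathbf 1_{2B}+f\mathbf 1_{(2B)^c}$; apply $e^{-tL}$, use the Gaussian bound together with the pointwise smallness of the heat kernel $p_t(x,y)$ of the Schrödinger operator — which, unlike the free heat kernel, satisfies $\int_{\RN}p_t(x,y)\,dy\le e^{-ct^{?}}\to 0$ once $V\not\equiv 0$ (this is where the Feynman–Kac representation $p_t(x,y)=\mathbb{E}[\dots e^{-\int_0^t V(\omega_s)ds}\dots]$ and $V\not\equiv 0$, $V\ge 0$ enter decisively). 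Then $|f(x)|=|e^{-tL}f(x)|\to 0$ as $t\to\infty$ for a.e.\ $x$, giving $f=0$.

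The same argument handles $\sqrt{L}$: if $e^{-t\sqrt{L}}f=f$ for all $t$, then applying $e^{-s\sqrt{L}}$ shows $f$ is subordinated-Poisson-invariant, and subordination expresses $e^{-t\sqrt{L}}$ as an average of $e^{-uL}$ over $u>0$ against a probability density, so the decay of $e^{-uL}1$ passes through the subordination integral to give decay of $e^{-t\sqrt{L}}1$, hence $\mathcal{K}_{\sqrt{L},p}=\{0\}$ as well. Alternatively one can just note $\mathcal{K}_{\sqrt{L},p}\subseteq\mathcal{K}_{L,p}$ since $e^{-t\sqrt{L}}f=f$ for all $t$ forces, via the spectral calculus / uniqueness of harmonic extensions, that $f$ is a fixed point of $e^{-tL}$ too. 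I expect the main obstacle to be the rigorous justification that the Schrödinger heat kernel has total mass tending to $0$ — i.e.\ that $\|e^{-tL}1\|_\infty\to 0$ — for the class $B_q$, $q\ge n/2$; this should follow from known exponential-decay estimates for $e^{-tL}1$ in terms of Shen's auxiliary function, or be reducible to the assertion that $0$ is not an eigenvalue and $1\notin$ the appropriate fixed-point set, using $V\not\equiv0$.
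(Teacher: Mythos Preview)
Your overall reduction --- verify that both $L$ and $\sqrt{L}$ satisfy \eqref{e2.1}, then prove $\mathcal{K}_{L,p}=\mathcal{K}_{\sqrt{L},p}=\{0\}$ and invoke Theorem~\ref{th3.1} --- is exactly the paper's strategy. The divergence is in how the kernel spaces are shown to be trivial.

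The paper does not use semigroup non-conservativity. Instead, for $f\in\mathcal{K}_{\sqrt{L},p}$ it writes $f=e^{-t\sqrt{L}}f$ and uses the kernel regularity \eqref{e4.3}--\eqref{e4.4} to see that $f\in W^{1,2}_{\rm loc}(\RN)$ with polynomial growth $|f(x)|=O(|x|^\epsilon)$; a cut-off approximation then yields $\langle Lf,\varphi\rangle=\langle \partial_t^2 e^{-t\sqrt{L}}f,\varphi\rangle=0$ for all test functions $\varphi$, so $f$ is weakly $L$-harmonic with polynomial growth. At that point the paper invokes the Liouville theorem $\mathcal{H}_d(L)=\{0\}$ for all $d\geq 0$ from \cite[Lemma~2.7]{DYZ}, which disposes of $f$ at once. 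Your route --- exploit the refined bound $p_t(x,y)\leq C_N t^{-n/2}e^{-c|x-y|^2/t}(1+\sqrt{t}\,m(x,V))^{-N}$ to force $e^{-tL}f(x)\to 0$ directly --- is a legitimate alternative: for $f\in\mathcal{M}_{(p;\beta)}$ one checks $|e^{-tL}f(x)|\leq C_N(1+\sqrt{t}\,m(x,V))^{-N}\cdot O(t^{\beta/(2p)})\to 0$ once $N>\beta/p$. Two cautions. First, the passage from $\int p_t(x,y)\,dy\to 0$ to $e^{-tL}f(x)\to 0$ genuinely requires this extra factor to beat the polynomial growth coming from unbounded $f\in\mathcal{M}_p$; you correctly flag this as the obstacle, but a complete proof must carry out the bound rather than leave it implicit. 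Second, your ``alternative'' inclusion $\mathcal{K}_{\sqrt{L},p}\subseteq\mathcal{K}_{L,p}$ via spectral calculus is not justified for $f\notin L^2(\RN)$; stick with subordination, or treat $\sqrt{L}$ directly as the paper does. In short: your plan works, but the paper's Liouville-based argument is cleaner given the machinery already available from \cite{DYZ}, whereas yours imports instead the refined Shen--Dziuba\'nski heat kernel estimates.
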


\begin{proof} By  Theorem~\ref{th3.1},  it suffices to show that for every $1\leq p<\infty,$
\begin{eqnarray}\label{e4.6}
\mathcal{K}_{L, p}= \mathcal{K}_{\sqrt{L}, p} =\{0\}.
\end{eqnarray}
 The proof of \eqref{e4.6} can be obtained by making modifications
 with Proposition 6.5, \cite{DY2}. See also \cite{Sh}. We give a brief argument
of this proof for completeness and convenience for the reader.

 Let us  prove that $\mathscr{L}_{\sqrt{L}}^{p,\lambda}(\mathbb{R}^n)
= {L}^{p,\lambda}(\mathbb{R}^n)$. For any $d\geq 0$,  one writes
\begin{eqnarray*}
{ {\mathcal H}_d(L)}=\Big\{f\in W^{1,2}_{\rm loc}({\mathbb R}^n):
L f =0 \ {\rm and}\   \ |f(x)|=O(|x|^d) \ \ {\rm as}\ |x|\rightarrow \infty\Big\}
\end{eqnarray*}
and
\begin{eqnarray*}
{ {\mathcal H}_{L}}=\bigcup_{d:\ 0\leq d<\infty}
{ {\mathcal H}_d(L)}.
\end{eqnarray*}
By  Lemma 2.7 of \cite{DYZ}, it follows that
for any $d\geq 0$,
$$
{ {\mathcal H}_{L}}={ {\mathcal H}_d(L)}=\big\{0\big\}.
$$

 Assume that $f\in
\mathcal{K}_{\sqrt{L}, p}\cap \mathcal{M}_p$. From estimates \eqref{e4.3} and \eqref{e4.4}, we see that
$f=e^{-t\sqrt{L}}f\in   W^{1,2}_{\rm
loc}({\mathbb R}^n)$  and $|f(x)|=O(|x|^{\epsilon}) $ for some $\epsilon>0.$
Because of the growth of $f$, we use a standard approximation argument
through a sequence $f_k$ as follows. For any $k\in{\mathbb N}$,
we denote by
$\eta_k$    a standard $C^{\infty}$ cut-off function
which is $1$ inside the ball $B(0, k)$, zero outside
$B(0, k+1)$, and let $f_k=f\eta_k\in
W^{1,2}({\mathbb R}^n)$. Since
$f=e^{-t\sqrt{L}}f$, we have that for any $\varphi\in C^1_0({\mathbb R}^n)$,
\begin{eqnarray*}
\langle Lf, \varphi\rangle = \langle Le^{-t\sqrt{L}}f, \varphi\rangle
&=&\lim_{k\rightarrow \infty}\langle Le^{-t\sqrt{L}}f_k, \varphi\rangle\\
&=&\lim_{k\rightarrow \infty}\langle {d^2\over dt^2}e^{-t\sqrt{L}}f_k,
\varphi\rangle
=\langle {d^2\over dt^2}e^{-t\sqrt{L}}f,
\varphi\rangle\\
&=&\langle {d^2\over dt^2}f, \varphi\rangle
=0,
\end{eqnarray*}
which  proves $f\in {\mathcal H}_{\epsilon}(L)=\{0\}$. Hence,  $\mathcal{K}_{\sqrt{L}, p}=\{0\}.$
Similar argument above shows that $\mathcal{K}_{L, p}=\{0\}.$
This completes the proof of Theorem~\ref{th4.1}.
\end{proof}

\begin{cor} Let $L=-\Delta+V$, where $V\not\equiv 0$ is  a non-negative potential in
$ B_q$ for some $q\geq n.$ From Theorem~\ref{th4.1} and the previous results
proved in \cite{DY2, DGMTZ, MSTZ}, we have the following result:
 \begin{itemize}
\item[(i)]  $\mathscr{L}_L^{p,\lambda}(\mathbb{R}^n)  = L^{p,\lambda}(\RN)$, when $\lambda\in (0, n);$

\item[(ii)]  $\mathscr{L}_L^{p,\lambda}(\mathbb{R}^n)\subsetneqq {\rm BMO}(\RN)$, when $\lambda=n$ (see \cite{DY2,DGMTZ});

\item[(iii)]  $\mathscr{L}_L^{p,\lambda}(\mathbb{R}^n)
\subsetneqq {\rm Lip}_{\alpha}({\mathbb R}^n)$ with $\alpha=(\lambda-n)/p$,
 when $\lambda\in (n, n+p)$ (see \cite{MSTZ}).
 \end{itemize}
\end{cor}

\medskip

\section{Proof of Theorem \ref{th4.2}}

\noindent {\it Proof of (1) of Theorem \ref{th4.2}.}
 Suppose $f\in L^{2,\lambda}(\RN)$.
From  Lemmas 2.4 and 3.9 of \cite{DYZ}, it follows that $u(x,t)=e^{-t\sqrt{L}}f(x)\in C^1(\mathbb{R}^{n+1}_+)$. It will be enough if we have proved
$$
 \|u\|_{{\rm HL}^{2,\lambda}_L(\Real_+^{n+1})}\leq C \|f\|_{L^{2,\lambda}(\RN)}.
 $$
 In fact,
\begin{eqnarray*}
\left(\frac{1}{r_B^\lambda} \int_0^{r_B}\!\int_B \big |t\nabla e^{-t\sqrt{L}}f(x)\big |^2
\, dx\frac{dt}{t}\right)^{1/2}
&\leq& \sum_{k=0}^{\infty}\frac{1}{r_B^{\lambda/2}}\left(\int_0^{r_B}\!\int_B
\big |t\nabla e^{-t\sqrt{L}}f_k(x)\big |^2 \, dx\frac{dt}{t}\right)^{1/2}\\
&=&:\sum_{k=0}^{\infty}J_k,
\end{eqnarray*}
where $f_0=f\chi_{2B}$ and $f_k=f\chi_{2^{k+1}B\backslash 2^kB}$ for $k=1,2, \cdots.$
Obviously,
$
|J_0|\leq   C \|f\|_{L^{2,\lambda}(\RN)}.
$
For any $x\in B$ and $k\in \mathbb{N}$,  we apply  estimates \eqref{e4.3} and \eqref{e4.4}
  to obtain
\begin{align*}
\big |t\nabla e^{-t\sqrt{L}}f_k(x)\big | &\leq  C\int_{2^{k+1}B\backslash 2^kB}
 \frac{t}{(t+|x-y|)^{n+1}}  |f(y)|\, dy\nonumber\\
&\leq C \frac{t}{(2^kr_B)^{n+1}} \int_{2^{k+1}B}|f(y)| \,dy \nonumber\\
&\leq C \frac{t}{(2^kr_B)^{1+\frac{n-\lambda}{2}}} \|f\|_{L^{2,\lambda}(\RN)},
\end{align*}
which yields
\begin{align*}
|J_k|  \leq C 2^{-k(1+\frac{n-\lambda}{2})} \|f\|_{L^{2,\lambda}(\RN)}.
\end{align*}
Hence,
$
\sum_{k=1}^\infty  |J_k|\leq  C\|f\|_{L^{2,\lambda}(\RN)},
$
and then
$\|u\|_{{\rm HL}^{2,\lambda}_L(\Real_+^{n+1})}\leq C \|f\|_{L^{2,\lambda}(\RN)}$.

\bigskip

Before we give the proof of (2) of Theorem \ref{th4.2}, we need some auxiliary lemmas.

\begin{lemma}\label{le2.6} Suppose $0\leq V\in L^q_{\rm loc}(\RN)$ for some $q> n/2.$
Let $u$ be a weak solution of ${\mathbb L}u=0$
in
the ball $B(Y_0,2r) \subset \mathbb{R}^{n+1}$.
Then for any $p\geq 1$,   there exits a constant $C=C(n, p)>0$ such that

\begin{eqnarray*}
\sup_{B(Y_0, r)}| u(Y)| \leq C \Big({1\over r^{n+1}}
\int_{B(Y_0, 2r)}| u(Y)|^pdY\Big)^{1/p}.
\end{eqnarray*}
\end{lemma}
\begin{proof}
For the proof, we refer to Lemma 2.6 of \cite{DYZ}.
\end{proof}
\begin{lemma}\label{le2.7}
Suppose $ V\in B_q$ for some $q\geq n/2.$
 Assume that  $u\in W_{\rm loc}^{1,2}(\RN)$ is a weak solution of  $Lu=(-\Delta+V)u=0$ in $\RN$. Also assume that
there is a $d>0$ such that
\begin{eqnarray*}
\int_{\RR}{|u(x)|\over 1+|x|^{n+d}} dx\leq C_{d}<\infty.
\end{eqnarray*}
Then $u=0$ in $\RN$.
\end{lemma}
\begin{proof}
For the proof, we refer to Lemma 2.7 of \cite{DYZ}.
\end{proof}

\begin{lemma}\label{le3.1} Let $0<\lambda<n$. For every  $u\in {{\rm HL}^{2,\lambda}_L}({\mathbb R}_+^{n+1})$ and
 for every $k\in{\mathbb N}$, there exists a constant $C(k,n,\lambda)>0$ such that
\begin{equation*} \label{dd}
\int_{\RN}{|u(x,{1/k})|^2\over (1+|x|)^{2n}}  dx\leq C(k,n,\lambda) <\infty,
\end{equation*}
hence $u(x, 1/k)\in L^2((1+|x|)^{-2n}dx)$. Therefore for  all $k\in{\mathbb N}$, $e^{-t\sqrt{L}}(u(\cdot, {1/k}))(x)$ exists
everywhere in ${\mathbb R}^{n+1}_+$.
\end{lemma}

\begin{proof}  Since $u\in C^{1}({\mathbb R}^{n+1}_+)$, it  reduces to show that for every $k\in{\mathbb N},$
\begin{eqnarray}\label{e3.1}
\int_{|x|\geq 1} {|u(x,{1/k})- u(x/|x|, 1/k) |^2 \over (1+|x|)^{2n}} dx\leq C(k,n,\lambda)\|u\|^2_{{{\rm HL}^{2,\lambda}_L}({\mathbb R}_+^{n+1})}<\infty.
 \end{eqnarray}
To do this, we write
\begin{align*}
&\hspace{-0.3cm}u(x, 1/k)- u(x/|x|, 1/k)\\&=\big[u(x, 1/k)- u(x, |x|)\big]
 +\big[u(x, |x|)-u(x/|x|, |x|)\big] +\big[u(x/|x|, |x|)-u(x/|x|, 1/k)\big].
 \end{align*}
Let
  \begin{eqnarray*}
 I=  \int_{|x|\geq 1 } {|u(x, 1/k)- u(x, |x|) |^2 \over (1+|x|)^{2n}} dx,
 \end{eqnarray*}
  \begin{eqnarray*}
 II=  \int_{|x|\geq 1 } {|u(x, |x|)-u(x/|x|, |x|) |^2 \over (1+|x|)^{2n}} dx,
 \end{eqnarray*}and
  \begin{eqnarray*}
 III=  \int_{|x|\geq 1 } {|u(x/|x|, |x|)-u(x/|x|, 1/k) |^2 \over (1+|x|)^{2n}} dx.
 \end{eqnarray*}

 Set $Y_0=(x,t)$ and $r=t/4$. We use  Lemma~\ref{le2.6} for $\partial_t u$ and Schwarz's inequality to obtain
 \begin{eqnarray}\label{e3.2}
 \big| \partial_t u(x, t)\big| &\leq& C\Big({1\over r^{n+1}} \int_{B(Y_0, 2r)}
 | \partial_s u(y, s) |^2 {dY } \Big)^{1/2}\nonumber\\
  &\leq& C\Big({1\over t^{n+1}} \int_{B(x, t/2)}\int_{t/2}^{3t/2}
 | \partial_s u(y, s) |^2 {dsdy } \Big)^{1/2}\nonumber\\
  &\leq& C t^{-1}\Big({1\over |B(x, 2t)|} \int_{0}^{2t}\int_{B(x, 2t)}
  s|  \partial_s u(y, t) |^2 {dyds } \Big)^{1/2}
  \nonumber\\
  &\leq&  C  t^{-(1+\frac{n-\lambda}{2})}\|u\|_{{{\rm HL}_L^{2,\lambda}} (\Real_+^{n+1})}.
  \end{eqnarray}
Thus,
 \begin{eqnarray}\label{e3.3}
 | u(x, |x|)-u(x, 1/k) |  =  \Big| \int_{1/k}^{|x|}   \partial_t u(x, t) dt \ \Big| \leq \frac{2}{n-\lambda} k^{(n-\lambda)/2} \|u\|_{{{\rm HL}_L^{2,\lambda}}(\Real_+^{n+1})}.
 \end{eqnarray}
It follows that
   \begin{eqnarray*}
 I+III
  &\leq&  C(k,n,\lambda)\|u\|^2_{{{\rm HL}_L^{2,\lambda}}(\Real_+^{n+1})} \int_{|x|\geq 1 } {1\over (1+|x|)^{2n}}  dx  \\
 &\leq& C(k,n,\lambda)  \|u\|^2_{{{\rm HL}_L^{2,\lambda}}(\Real_+^{n+1})}.
  \end{eqnarray*}

For the term $II,$  we  have that for any $x\in \RR,$
 $$
  u(x, |x|)- u(x/|x|, |x|)  =\int_1^{|x|}  D_r u(r\omega, |x|)  dr, \ \ \ \ x=|x| \omega.
 $$
 Let $B=B(0, 1)$  and  $2^mB=B(0, 2^m)$.
Note that for every $m\in{\mathbb N}$,  we have
  \begin{align*}
    \int_{2^mB\backslash 2^{m-1}B} \left| \int_1^{|x|}\left|    D_r  u(r\omega, |x|) \right|   dr \right|^2 dx
  &=     \int_{2^{m-1}}^{2^{m}} \int_{|\omega|=1} \left| \int_1^{\rho}     D_r u(r\omega, \rho)    dr \right|^2 \rho^{n-1}  d\omega d\rho   \nonumber\\
      &\leq   2^{mn} \int_{2^{m-1}}^{2^{m}} \int_{|\omega|=1}\int_1^{ 2^{m}}    |  D_r u(r\omega, \rho)  |^2 dr d\omega d\rho   \nonumber\\
   	  &\leq    2^{mn} \int_{2^{m-1}}^{2^{m}} \int_{2^mB\backslash B}   |  \nabla_y u(y, t)  |^2 |y|^{1-n} dy dt\nonumber\\
	  &\leq    2^{mn} \int_{2^{m-1}}^{2^{m}} \int_{2^mB}   |  \nabla_y u(y, t)  |^2  dy dt,
  \end{align*}
  which gives
  \begin{align*}
 \int_{2^mB\backslash 2^{m-1}B}   | u(x, |x|) - u(x/|x|, |x|) |^2    dx
      &\leq     C2^{m(2n-1)} \left( {1\over |2^mB|}  \int_{0}^{2^{m}} \int_{2^mB }    | t \nabla_y u(y, t)  |^2
	{dydt\over t}   \right) \nonumber
	 \\
    &\leq  C  2^{m(n+\lambda-1)}\|u\|^2_{{{\rm HL}_L^{2,\lambda}}(\Real_+^{n+1})}.
  \end{align*}
Therefore,
 \begin{align*}
II
  &\leq C \sum_{m=1}^{\infty}  {1\over 2^{2mn}}
   \int_{2^mB\backslash 2^{m-1}B}    | u(x, |x|) - u(x/|x|, |x|) |^2    dx\leq C(n,\lambda)\|u\|^2_{{{\rm HL}_L^{2,\lambda}}(\Real_+^{n+1})}.
  \end{align*}
  Combining estimates of $I, II$ and $III$, we have obtained \eqref{e3.1}.

  Note that by Lemma~\ref{size},  the semigroup kernels ${\mathcal P}_t(x,y)$, associated to $e^{-t\sqrt{L}}$,
  decay
faster than any power of $1/|x-y|$.
Hence,    for  all $k\in{\mathbb N}$, $e^{-t\sqrt{L}}(u(\cdot, {1/k}))(x)$ exists
everywhere in ${\mathbb R}^{n+1}_+$.
This completes the proof.
  \end{proof}

\medskip

\noindent {\it Proof of   (2) of Theorem \ref{th4.2}.} To prove it, we will adopt the argument as in \cite{FJN, DYZ,JXY} and apply the key Theorem \ref{th4.1}.
Suppose
 $u\in {\rm HL}^{2,\lambda}_L(\Real_+^{n+1})$. Our aim is to look for a
 function $f\in L^{2,\lambda}(\RN)$ such that
$$
u(x,t)=e^{-t\sqrt{L}}f(x),  \quad \quad  {\rm for \ each } \quad (x,t)\in {\mathbb R}^{n+1}_+.
$$

To do this, for every $k\in {\mathbb N}$, we write $f_k(x)=u(x, k^{-1})$. We will first show that
\begin{align}\label{e4.10}
\sup_{k\in \mathbb{N}}\|f_k\|_{{\mathscr L}_{\sqrt{L}}^{2,\lambda}(\RN)}
\leq C\|u\|_{{\rm HL}^{2,\lambda}_L(\Real_+^{n+1})}.
\end{align}
If (\ref{e4.10}) has been proved, then by { Theorem~\ref{th4.1}}: $L^{2,\lambda}(\RN)= {\mathscr L}_{\sqrt{L}}^{2,\lambda}(\RN)$, we can obtain
\begin{align}\label{e4.11}
\|f_k\|_{L^{2,\lambda}(\RN)}=\|f_k\|_{{\mathscr L}_{\sqrt{L}}^{2,\lambda}(\RN)}
\leq C\|u\|_{{\rm HL}^{2,\lambda}_L(\Real_+^{n+1})}<\infty.
\end{align}
Since Lemmas \ref{le2.6},\ref{le2.7}, \ref{le3.1} are at our disposal, we can follow the arguments of Lemmas 3.2 and 3.4 of
\cite{DYZ} to obtain  the following two facts of function
  $u $ in the space  $  {\rm HL}^{2,\lambda}_L(\Real_+^{n+1})$ with $0<\lambda<n$:

1) For every $k\in{\mathbb N}$,
$
u(x, t+{1/k})=e^{-t\sqrt{L}}\big(u(\cdot, {1/k})\big)(x)$ for all $ x\in\RN $ and $ t>0.
$

2) There exists a constant $C>0$ (depending only on $n$ and $\lambda$) such that for all  $k\in{\mathbb N},$
$$
\sup_{x_B, r_B}   r_B^{-\lambda}\int_0^{r_B}\!\int_{B(x_B, r_B)} t
\big| \partial_t e^{-t\sqrt{L}}\big(u(\cdot, {1/k})\big)(x)\big|^2 {dx dt}
  \leq C\|u\|^2_{{\rm HL}^{2,\lambda}_L(\Real_+^{n+1})}<\infty.
$$

\noindent By Lemma 3.5 of \cite{DYZ},  we have
 \begin{align*}
\|f_k\|_{{\mathscr L}_{\sqrt{L}}^{2,\lambda}(\RN)}\leq C\sup_B
\left(\frac{1}{r_B^\lambda} \int_0^{r_B} \!\!\int_B|t\partial_t e^{-t\sqrt{L}}
f_k(x)|^2 \frac{dxdt}{t} \right)^{1/2}\leq C\|u\|_{{\rm HL}^{2,\lambda}_L(\Real_+^{n+1})}<\infty,
\end{align*}
which implies \eqref{e4.10}.

Next, we will look for    a function $f\in L^{2,\lambda}$ through
$L^2(B(0,2^j))$-boundedness of $\{f_k\}$ for each $j\in {\mathbb N}$.
Indeed, for every  $j\in \mathbb{N}$  we use \eqref{e4.11} to obtain
$$
\int_{B(0,2^j)} |f_k(x)|^2 \leq C 2^{j\lambda}
\|u\|_{{\rm HL}^{2,\lambda}_L(\Real_+^{n+1})}^2.
$$
This tells us that  the sequence $\{f_k\}_{k=1}^\infty$ is bounded in $L^2(B(0,2^j))$.
So, after passing to a subsequence, the sequence converges weakly to  a function $g_j\in L^2(B(0,2^j))$.
Now we define a function $f(x)$ by
$$
f(x)=g_j(x), \quad \quad  {\rm if } \ x\in  B(0,2^j),  \quad j=1,2,\cdots.
$$
It is easy to see that $f$ is well defined on $\RN=\cup_{j=1}^{\infty}B(0, 2^j).$
Also  we can  check that for any open ball $B\subset \RN$,
$$
\int_{B} |f(x)|^2 dx\leq C r_B^{\lambda} \|u\|_{{\rm HL}^{2,\lambda}_L(\Real_+^{n+1})}^2,
$$
which implies
$$
\|f\|_{L^{2,\lambda}(\RN)}\leq C \|u\|_{{\rm HL}^{2,\lambda}_L(\Real_+^{n+1})}.
$$

Finally,  we will show that $u(x,t)=e^{-t\sqrt{L}} f(x)$.  Since $u(x,\cdot)$ is continuous on ${\mathbb R}_+$, we have
 $
u(x,t)=\lim\limits_{k\to +\infty}u(x,t+k^{-1}),
 $
and by 1),
$u(x,t)=\lim\limits_{k\to +\infty}e^{-t\sqrt{L}}\big(u(\cdot,k^{-1})\big)(x).
$
It reduces to show
 \begin{align}
 \label{e4.12}
 \lim\limits_{k\to +\infty}e^{-t\sqrt{L}}\big(u(\cdot,k^{-1})\big)(x)=e^{-t\sqrt{L}}f(x).
\end{align}
Indeed, we  recall that  ${\mathcal P}_t(x,y)$ is the kernel of $e^{-t\sqrt{L}}$, and for any $\ell\in \mathbb{N}$,  we write
\begin{align*}
e^{-t\sqrt{L}}\big(u(\cdot,k^{-1})\big)(x)&=\int_{B(x,2^\ell t)} {\mathcal P}_t(x,y) f_k(y) dy
+ \int_{\RN\backslash B(x,2^\ell t)} {\mathcal P}_t(x,y) f_k(y) dy.
\end{align*}
Using   the H\"older  inequality, we obtain
\begin{align*}
\left|\int_{\RN\backslash B(x,2^\ell t)} {\mathcal P}_t(x,y) f_k(y) dy\right|
&\leq C\sum_{i=\ell}^{\infty} 2^{-i} (2^it)^{-n}\int_{B(x, 2^{i+1}t)} |f_k(y)| \, dy\\
&\leq C\sum_{i=\ell}^{\infty} 2^{-i} (2^it)^{(\lambda-n)/2} \|f_k\|_{L^{2,\lambda}(\RN)}\\
&\leq  C 2^{-\ell(1+\frac{n-\lambda}{2})}   t^{(\lambda-n)/2} \|f_k\|_{L^{2,\lambda}(\RN)}.
\end{align*}
From \eqref{e4.11}, we have that $ \|f_k\|_{L^{2,\lambda}(\RN)}\leq C \|u\|_{{\rm HL}^{2,\lambda}_L(\Real_+^{n+1})}$
for some constant $C>0$ independent of $k.$
Hence,
$$
\lim\limits_{\ell\to +\infty}\lim\limits_{k\to +\infty}\int_{\RN\backslash B(x,2^\ell t)} {\mathcal P}_t(x,y) f_k(y) dy
= \lim\limits_{\ell\to +\infty}\left(  C 2^{-\ell (1+\frac{n-\lambda}{2})}   t^{(\lambda-n)/2}
\|u\|_{{\rm HL}^{2,\lambda}_L(\Real_+^{n+1})}
\right)=0
$$
since $\lambda\in (0, n).$ Therefore,
\begin{align}
 \label{e4.13}
 \lim\limits_{k\to +\infty}e^{-t\sqrt{L}}\big(u(\cdot,k^{-1})\big)(x)=\lim\limits_{\ell\to +\infty}\lim\limits_{k\to +\infty}
 \int_{B(x,2^\ell t)} {\mathcal P}_t(x,y) f_k(y) dy =
 e^{-t\sqrt{L}}f(x),
\end{align}
and \eqref{e4.12}
follows readily.
Then we  have showed that $u(x,t)=e^{-t\sqrt{L}} f(x)$. The proof of Theorem \ref{th4.2} is complete.

\bigskip

\noindent
{\bf Acknowledgments.} \  L. Song is supported in part by Guangdong Natural Science Funds for Distinguished Young Scholar
(No. 2016A030306040) and  NNSF of China (Nos 11471338 and 11622113). L. Yan is supported by the NNSF of China (Nos 11371378 and  ~11521101)  and Guangdong Special Support Program.

\medskip

\end{document}